\numberwithin{equation}{section}
\newtheorem{Th}{Theorem}[section]
\theoremstyle{definition}
\newtheorem{Example}[Th]{Example}
\newtheorem{df}[Th]{Definition}
\theoremstyle{remark}
\newtheorem{Remark}[Th]{Remark}
\newcommand{\R}{\mathbb{R}}
\newcommand{\N}{\mathbb{N}}
\newcommand{\T}{\mathbb{T}}
\begin{document}

\title{STRONG MINIMIZERS OF THE CALCULUS OF VARIATIONS
ON TIME SCALES\\  AND THE WEIERSTRASS CONDITION\thanks{Accepted 12/May/2009 for publication in the
Proceedings of the Estonian Academy of Sciences.}}

\author{Agnieszka B. Malinowska\footnote{On leave of absence from Faculty of Computer Science, Bia{\l}ystok Technical University,
15-351 Bia\l ystok, Poland. E-mail: abmalina@pb.bialystok.pl}\\
\texttt{abmalinowska@ua.pt}
\and
Delfim F. M. Torres\\
\texttt{delfim@ua.pt}}

\date{Department of Mathematics\\
      University of Aveiro\\
      3810-193 Aveiro, Portugal}

\maketitle


\begin{abstract}
We introduce the notion of strong local minimizer
for the problems of the calculus of variations
on time scales. Simple examples show that on a time scale
a weak minimum is not necessarily a strong minimum.
A time scale form of the Weierstrass necessary optimality condition is proved, which enables to include and generalize
in the same result both continuous-time
and discrete-time conditions.
\end{abstract}

\smallskip

\noindent \textbf{Mathematics Subject Classification 2000:}
49K05, 39A12.

\smallskip


\smallskip

\noindent \textbf{Keywords:} calculus of variations,
time scales, strong minimizers, Weierstrass optimality condition.


\section{Introduction}

Dynamic equations on time scales
is a recent subject that allows the unification and extension
of the study of differential and difference equations in one and same theory \cite{BP}.

The calculus of variations on time scales was introduced in 2004
with the papers of Martin Bohner \cite{b7} and Roman Hilscher and
Vera Zeidan \cite{HZ1}. Roughly speaking, in \cite{b7} the basic
problem of the calculus of variations on time scales with given
boundary conditions is introduced, and time scale versions of the
classical necessary optimality conditions of Euler-Lagrange and
Legendre proved, while in \cite{HZ1} necessary conditions as well as
sufficient conditions for variable end-points calculus of variations
problems on time scales are established. Since the two pioneer works
\cite{b7,HZ1} and the understanding that much remains to be done in
the area \cite{F:T:07}, several recent studies have been dedicated
to the calculus of variations on time scales: the time scale
Euler-Lagrange equation was proved for problems with double
delta-integrals \cite{B:G:07} and for problems with higher-order
delta-derivatives \cite{F:T:08}; a correspondence between the
existence of variational symmetries and the existence of conserved
quantities along the respective Euler-Lagrange delta-extremals was
established in \cite{b:t:08}; optimality conditions for
isoperimetric problems on time scales with multiple constraints and
Pareto optimality conditions for multiobjective delta variational
problems were studied in \cite{abmalina:t}; a weak maximum principle
for optimal control problems on time scales has been obtained in
\cite{HZ2}. Such results may also be formulated via the
nabla-calculus on time scales, and seem to have interesting
applications in economics
\cite{Almeida:T,Atici:Uysal:06,Atici:Uysal:08,NataliaHigherOrderNabla}.

In all the works available in the literature on time scales the
variational extrema are regarded in a weak local sense. Differently,
here we consider strong solutions of problems of the calculus of
variations on time scales. In Section~\ref{sec:tsc} we briefly
review the necessary results of the calculus on time scales. The
reader interested in the theory of time scales is referred to
\cite{BP, BP2}, while for the classical continuous-time calculus of
variations we refer to \cite{BM,L}, and to \cite{K:P:91} for the
discrete-time setting. In Section~\ref{sec:mr} the concept of strong
local minimum is introduced (\textrm{cf.} Definition~\ref{def:slm}),
and an example of a problem of the calculus of variations on the
time scale $\mathbb{T}=\{\frac{1}{n}:n\in \N\}\cup \{0 \}$ is
considered showing that the standard weak minimum used in the
literature on time scales is not necessarily a strong minimum
(\textrm{cf.} Example~\ref{ex:weak:vs:strong}). Our main result is a
time scale version of the Weierstrass necessary optimality condition
for strong local minimum (\textrm{cf.} Theorem~\ref{Weier}). We end
with Section~\ref{sec:ex}, illustrating our main result with the
particular cases of discrete-time and the $q$-calculus of variations
\cite{Bangerezako}.


\section{Time Scales Calculus}
\label{sec:tsc}

In this section we introduce basic definitions and results that will
be needed for the rest of the paper. For a more general theory of
calculus on time scales, we refer the reader to \cite{BP, BP2}.

A nonempty closed subset of $\mathbb{R}$ is called a \emph{time
scale} and it is denoted by $\mathbb{T}$. Thus, $\mathbb{R}$,
$\mathbb{Z}$, and $\mathbb{N}$, are trivial examples of times
scales. Other examples of times scales are: $[-2,4] \bigcup
\mathbb{N}$, $h\mathbb{Z}:=\{h z | z \in \mathbb{Z}\}$ for some
$h>0$, $q^{\mathbb{N}_0}:=\{q^k | k \in \mathbb{N}_0\}$ for some
$q>1$, and the Cantor set. We assume that a time scale $\mathbb{T}$
has the topology that it inherits from the real numbers with the
standard topology.

The \emph{forward jump operator}
$\sigma:\mathbb{T}\rightarrow\mathbb{T}$ is defined by
$$\sigma(t)=\inf{\{s\in\mathbb{T}:s>t}\},
\mbox{ for all $t\in\mathbb{T}$},$$ while the \emph{backward jump
operator} $\rho:\mathbb{T}\rightarrow\mathbb{T}$ is defined by
$$\rho(t)=\sup{\{s\in\mathbb{T}:s<t}\},\mbox{ for all
$t\in\mathbb{T}$},$$ with $\inf\emptyset=\sup\mathbb{T}$
(\textrm{i.e.}, $\sigma(M)=M$ if $\mathbb{T}$ has a maximum $M$) and
$\sup\emptyset=\inf\mathbb{T}$ (\textrm{i.e.},
$\rho(m)=m$ if $\mathbb{T}$ has a minimum $m$).

If $\sigma(t)>t$, we say that $t$ is \emph{right-scattered}, while
if $\rho(t)<t$ we say that $t$ is \emph{left-scattered}. Also, if
$t<\sup\mathbb{T}$ and $\sigma(t)=t$, than $t$ is called
\emph{right-dense}, and if $t
>\inf\mathbb{T}$ and $\rho(t)=t$, then $t$ is called
\emph{left-dense}. The set $\mathbb{T}^{\kappa}$ is defined as
$\mathbb{T}$ without the left-scattered maximum of $\mathbb{T}$ (in
case it exists).

The \emph{graininess function} $\mu:\mathbb{T}\rightarrow[0,\infty)$
is defined by
$$\mu(t)=\sigma(t)-t,\mbox{ for all $t\in\mathbb{T}$}.$$

\begin{Example}
If $\mathbb{T} = \mathbb{R}$, then $\sigma(t) = \rho(t) = t$ and
$\mu(t)= 0$. If $\mathbb{T} = \mathbb{Z}$, then $\sigma(t) = t + 1$,
$\rho(t) = t - 1$, and $\mu(t)= 1$. On the other hand, if
$\mathbb{T} = q^{\mathbb{N}_0}$, where $q>1$ is a fixed real number,
then we have $\sigma(t) = q t$, $\rho(t) = q^{-1} t$, and $\mu(t)=
(q-1) t$.
 \end{Example}

A function $f:\mathbb{T}\rightarrow\mathbb{R}$ is \emph{regulated}
if the right-hand limit $f(t+)$ exists (finite) at all right-dense
points $t\in \mathbb{T}$ and the left-hand limit $f(t-)$ exists at
all left-dense points $t\in \mathbb{T}$. A function $f$ is
\emph{rd-continuous} (we write $f\in C_{rd}$) if it is regulated and
if it is continuous at all right-dense points $t\in \mathbb{T}$.
Following \cite{HZ1}, a function $f$ is \emph{ piecewise
rd-continuous} (we write $f\in C_{prd}$) if it is regulated and if
it is rd-continuous at all, except possibly at finitely many,
right-dense points $t\in \mathbb{T}$.

We say that a function $f:\mathbb{T}\rightarrow\mathbb{R}$ is
\emph{delta differentiable} at $t\in\mathbb{T}^{\kappa}$ if there
exists a number $f^{\triangle}(t)$ such that for all $\varepsilon>0$ there is a neighborhood $U$ of $t$ (\textrm{i.e.},
$U=(t-\delta,t+\delta)\cap\mathbb{T}$ for some $\delta>0$) such that
$$|f(\sigma(t))-f(s)-f^{\Delta}(t)(\sigma(t)-s)|
\leq\varepsilon|\sigma(t)-s|,\mbox{ for all $s\in U$}.$$ We call
$f^{\triangle}(t)$ the \emph{delta derivative} of $f$ at $t$ and say
that $f$ is \emph{delta differentiable} on $\mathbb{T}^{\kappa}$
provided $f^{\triangle}(t)$ exists for all
$t\in\mathbb{T}^{\kappa}$. Note that in right-dense points
$f^{\triangle} (t)=\lim_{s\rightarrow t}=\frac{f(t)-f(s)}{t-s}$,
provided this limit exists, and in right-scattered points
$f^{\triangle} (t)=\frac{f(\sigma(t)-f(t)}{\mu(t)}$ provided $f$ is
continuous at $t$.

\begin{Example}
\label{ex:der:QC} If $\mathbb{T} = \mathbb{R}$, then $f^{\Delta}(t)
= f'(t)$, \textrm{i.e.}, the delta derivative coincides with the
usual one. If $\mathbb{T} = \mathbb{Z}$, then $f^{\Delta}(t) =
\Delta f(t) = f(t+1) - f(t)$. If $\mathbb{T} = q^{\mathbb{N}_0}$,
$q>1$, then $f^{\Delta} (t)=\frac{f(q t)-f(t)}{(q-1) t}$,
\textrm{i.e.}, we get the usual derivative of Quantum calculus
\cite{QC}.
\end{Example}

Let $f, g: \mathbb{T} \rightarrow \mathbb{R}$ be delta
differentiable at $t \in \T^{\kappa}$. Then (see, e.g., \cite{BP}),

\begin{itemize}

\item[(i)] the product $fg$ is delta differentiable at $t$ with
\begin{equation*}
(fg)^{\triangle}(t)=f^{\triangle}(t)g^\sigma(t)+f(t)g^{\triangle}(t)=
f^{\triangle}(t)g(t)+f^{\sigma}(t)g^{\triangle}(t) \, ;
\end{equation*}

\item[(ii)]if $g(t)g^{\sigma}(t)\neq 0$, then $\frac{f}{g}$ is delta differentiable at $t$ with
\begin{equation*}
\left(\frac{f}{g}\right)^{\triangle}(t)=\frac{f^{\triangle}(t)g(t)
-f(t)g^{\triangle}(t)}{g(t)g^{\sigma}(t)},
\end{equation*}
where we abbreviate here and throughout the text $f\circ\sigma$ by
$f^\sigma$.
\end{itemize}

A function $f$ is \emph{ rd-continuously delta differentiable} (we
write $f\in C_{rd}^{1}$) if $f^{\triangle}$ exists for all $t \in
\T^{\kappa}$ and $f^{\triangle}\in C_{rd}$. A continuous function
$f$ is  \emph{ piecewise rd-continuously delta differentiable} (we
write $f\in C_{prd}^{1}$) if $f$ is continuous and $f^{\triangle}$
exists for all, except possibly at finitely many, $t \in
\T^{\kappa}$ and $f^{\triangle}\in C_{rd}$. It is known that
piecewise rd-continuous functions possess an \emph{antiderivative},
\textrm{i.e.}, there exists a function $F$ with $F^{\triangle}=f$,
and in this case the \emph{delta integral} is defined by
$\int_{c}^{d}f(t)\triangle t=F(d)-F(c)$ for all $c,d\in\T$.

\begin{Example}
Let $a, b \in \mathbb{T}$ with $a < b$. If $\mathbb{T} =
\mathbb{R}$, then $\int_{a}^{b}f(t)\Delta t = \int_{a}^{b}f(t) dt$,
where the integral on the right-hand side is the classical Riemann
integral. If $\mathbb{T} = \mathbb{Z}$, then $\int_{a}^{b}f(t)\Delta
t = \sum_{k=a}^{b-1} f(k)$. If $\mathbb{T} = q^{\mathbb{N}_0}$,
$q>1$, then $\int_{a}^{b}f(t)\Delta t = (1 - q) \sum_{t \in [a,b)} t
f(t)$.
\end{Example}

The delta integral has the following properties (see, e.g.,
\cite{BP}):
\begin{itemize}

\item[(i)] if $f\in C_{prd}$ and $t \in \T^{\kappa}$, then

\begin{equation*}
\int_t^{\sigma(t)}f(\tau)\triangle\tau=\mu(t)f(t) \, ;
\end{equation*}

\item[(ii)]if $c,d\in\T$ and $f,g\in C_{prd}$, then

\begin{equation*}
\int_{c}^{d}f(\sigma(t))g^{\triangle}(t)\triangle t
=\left[(fg)(t)\right]_{t=c}^{t=d}-\int_{c}^{d}f^{\triangle}(t)g(t)\triangle
t;
\end{equation*}

\begin{equation*}
\int_{c}^{d}f(t)g^{\triangle}(t)\triangle t
=\left[(fg)(t)\right]_{t=c}^{t=d}-\int_{c}^{d}
f^{\triangle}(t)g(\sigma(t))\triangle t.
\end{equation*}
\end{itemize}


\section{The Weierstrass Necessary Condition}
\label{sec:mr}

Let $\T$ be a bounded time scale. Throughout we let $t_{0},t_{1}\in
\T$ with $t_{0}< t_{1}$. For an interval $[t_{0},t_{1}]\cap \T$ we simply write $[t_{0},t_{1}]$. The problem of the calculus
of variations on time scales under consideration has the form
\begin{equation}\label{vp}
 \text{minimize} \quad   \mathcal{L}[x]=\int_{t_{0}}^{t_{1}}f(t,x^{\sigma}(t),x^{\triangle}(t))\triangle
     t ,
\end{equation}
over all $x\in C_{prd}^{1}$ satisfying the boundary conditions
\begin{equation}\label{bc}
    x(t_{0})=\alpha,\,  \ x(t_{1})=\beta, \quad \alpha,\beta\in \R,
\end{equation}
where $f:[t_{0}, t_{1}]^{\kappa}\times \R \times \R \rightarrow \R$.

A function $x\in C_{prd}^{1}$ is said to be admissible if it
satisfies conditions \eqref{bc}.

Let us consider two norms in $C_{prd}^{1}$:
\begin{equation*}
    \|x\|_{1}=\sup_{t\in[t_{0},t_{1}]^{\kappa}}|x^{\sigma}(t)|
    +\sup_{t\in[t_{0},t_{1}]^{\kappa}\setminus T}|x^{\triangle}(t)|,
\end{equation*}
where here and subsequently $T$ denotes the set of points of
$[t_{0},t_{1}]^{\kappa}$ where $x^{\triangle}(t)$ does not exist,
and
\begin{equation*}
    \|x\|_{0}=\sup_{t\in[t_{0},t_{1}]^{\kappa}}|x^{\sigma}(t)|.
\end{equation*}
The norms $\|\cdot \|_{0}$ and $\|\cdot \|_{1}$ are called the \emph{strong} and the \emph{weak} norm, respectively. The
strong and weak norms lead to the following definitions for local
minimum:

\begin{df}
\label{def:slm} An admissible function $\bar{x}$ is said to be a
\emph{strong local minimum } for \eqref{vp}--\eqref{bc} if there
exists $\delta >0$ such that $\mathcal{L}[\bar{x}]\leq
\mathcal{L}[x]$ for all admissible $x$ with
$\|x-\bar{x}\|_{0}<\delta$. Likewise, an admissible function
$\bar{x}$ is said to be a \emph{weak local minimum} for
\eqref{vp}--\eqref{bc} if there exists $\delta >0$ such that
$\mathcal{L}[\bar{x}]\leq \mathcal{L}[x]$ for all admissible $x$
with $\|x-\bar{x}\|_{1}<\delta$.
\end{df}

A weak minimum may not necessarily be a strong minimum:

\begin{Example}
\label{ex:weak:vs:strong}
Consider the variational problem
\begin{equation}\label{pr}
    \mathcal{L}[x]=\int_{0}^{1}[x^{\triangle}(t)^{2}
    -x^{\triangle}(t)^{4}]\triangle
    t,   \quad x(0)=0\, , \quad x(1)=0 \, ,
\end{equation}
on the time scale $\mathbb{T}=\{\frac{1}{n}:n\in \N\}\cup \{0 \}$
(note that we need to add zero in order to have a closed set). Let
us show that $\tilde{x}(t)=0$, $0\leq t \leq 1$ is a weak local
minimum for \eqref{pr}. In the topology induced by $\|\cdot\|_{1}$
consider the open ball of radius $1$ centered at $\tilde{x}$,
\textrm{i.e.},
\begin{equation*}
B_{1}^{1}(\tilde{x})=\left\{x\in
C_{prd}^{1}:\|x-\tilde{x}\|_{1}<1\right \}.
\end{equation*}
We use the notation $B_{r}^{k}$ for the ball of radius $r$ in norm
$\|\cdot\|_{k}$, $k=1,2$. For every $x\in B_{1}^{1}(\tilde{x})$ we
have
\begin{equation*}
    |x^{\triangle}(t)|\leq 1,   \quad \forall t\in [0,1]^{\kappa},
\end{equation*}
hence $\mathcal{L}[x]\geq 0$. This proves that $\tilde{x}$ is a weak local minimum
for \eqref{pr} since $\mathcal{L}[\tilde{x}]=0$.
Now let us consider the function defined by
\begin{gather*}
x_{d}(t)=
\begin{cases}
d & \text{ if } t=\sigma(t_{0}) \\
 0 & \text{ otherwise }
\end{cases} \, ,
\quad t_{0}\in (0,1)\cap \mathbb{T}, \quad \sigma(t_{0})\neq 1\,
,\quad d\in \R\backslash\{0 \} .
\end{gather*}
Function $x_{d}$ is admissible and
 $\|x_{d}\|_{0}=\sup_{t\in[0,1]^{\kappa}}|x^{\sigma}_{d}(t)|=|d|$.
Therefore, for every $\delta >0$ there is a $d$ such that
\begin{equation*}
x_{d}\in B_{\delta}^{0}(\tilde{x})=\left\{x\in
C_{prd}^{1}:\|x-\tilde{x}\|_{0}<\delta\right \}.
\end{equation*}
We have
\begin{equation*}
x_{d}^{\triangle}(t_{0})=\frac{d}{\mu (t_{0})},
\end{equation*}
\begin{equation*}
x_{d}^{\triangle}(\sigma (t_{0}))=\frac{-d}{\mu (\sigma (t_{0}))} \, ,
\end{equation*}
and $x_{d}^{\triangle}(t)=0$ for all $t\neq t_{0},\sigma (t_{0})$.
Hence, $|x_{d}^{\triangle}(t)|$, $0\leq t \leq 1$, can take arbitrary
large values since $\mu (t)=\frac{t^{2}}{1-t}\rightarrow 0$ as
$t\rightarrow 0$.
Note that for every $\delta >0$ we can choose $d$ and $t_{0}$ such
that $x_{d}\in B_{\delta}^{0}(\tilde{x})$ and $\frac{d}{\mu (\sigma
(t_{0}))}> 1$.
Finally,
\begin{equation*}
\begin{split}
\mathcal{L}[x_{d}]&=\int_{0}^{1}[x_{d}^{\triangle}(t)^{2}
-x^{\triangle}_{d}(t)^{4}]\triangle t \\
&=\mu (t_{0})\left [(\frac{d}{\mu (t_{0})})^{2}
-(\frac{d}{\mu
    (t_{0})})^{4}\right ]+ \mu (\sigma(t_{0}))\left [(\frac{d}{\mu (\sigma(t_{0}))})^{2}
-(\frac{d}{\mu
    (\sigma(t_{0}))})^{4}\right]\\
&=\frac{d^{2}}{\mu (t_{0})}\left [1-\frac{d^{2}}{\mu ^{2}
    (t_{0})}\right]+\frac{d^{2}}{\mu (\sigma (t_{0}))}\left[1-\frac{d^{2}}{\mu ^{2}
    (\sigma(t_{0}))}\right]<0 \, .
\end{split}
\end{equation*}
Therefore, the trajectory $\tilde{x}$ cannot be a strong minimum for \eqref{pr}.
\end{Example}

\bigskip

From now on we assume that $f:[t_{0}, t_{1}]^{\kappa}\times \R
\times \R \rightarrow \R$ has partial continuous derivatives $f_{x}$
and $f_{v}$, respectively with respect to the second and third
variables, for all $t\in[t_{0}, t_{1}]^{\kappa}$, and
$f(\cdot,x,v)$, $f_{x}(\cdot,x,v)$ and $f_{v}(\cdot,x,v)$ are
continuous.

Let $E:[t_{0}, t_{1}]^{\kappa}\times \R^{3} \rightarrow \R$ be the
function with the values
\begin{equation*}
E(t,x,r,q)=f(t,x,q)-f(t,x,r)-(q-r)f_{r}(t,x,r) \, .
\end{equation*}
This function, called the \emph{Weierstrass excess function}, is
utilized in the following theorem:

\begin{Th}[Weierstrass necessary optimality condition on time scales]
\label{Weier}
Let $\T$ be a time scale, $t_{0},t_{1}\in \T$, $t_{0}< t_{1}$.
Assume that the function $f(t,x,r)$ in problem \eqref{vp}--\eqref{bc}
satisfies the following condition:
\begin{equation}\label{convex}
    \mu(t)f(t,x,\gamma r_{1}+(1-\gamma)r_{2})\leq \mu(t)\gamma f(t,x,
    r_{1})+\mu(t)(1-\gamma)f(t,x,r_{2})
\end{equation}
for each $(t,x)\in[t_{0}, t_{1}]^{\kappa}\times \R$, all
$r_{1},r_{2}\in\R$ and $\gamma \in [0,1]$. Let $\bar{x}$ be a
piecewise continuous function. If $\bar{x}$ is a strong local
minimum for \eqref{vp}--\eqref{bc}, then
\begin{equation}\label{wf}
E[t,\bar{x}^{\sigma}(t),\bar{x}^{\triangle}(t),q]\geq 0
\end{equation}
for all $t\in[t_{0}, t_{1}]^{\kappa}$ and all $q\in \R$, where we replace $\bar{x}^{\triangle}(t)$ by $\bar{x}^{\triangle}(t-)$ and $\bar{x}^{\triangle}(t+)$ at finitely many points $t$ where
$\bar{x}^{\triangle}(t)$ does not exist.
\end{Th}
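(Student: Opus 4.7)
My plan is to prove \eqref{wf} by splitting into cases according to whether $t$ is right-scattered or right-dense, since the two are governed by completely different mechanisms. If $\mu(t)>0$ then the hypothesis \eqref{convex}, upon dividing by $\mu(t)$, reduces to ordinary convexity of $v\mapsto f(t,x,v)$ for every fixed $(t,x)$. Combined with the standing continuity of $f_v$, this gives the tangent-line inequality $f(t,x,q)\ge f(t,x,r)+(q-r)\,f_v(t,x,r)$ for every $q,r\in\R$, and specialising to $x=\bar{x}^{\sigma}(t)$, $r=\bar{x}^{\triangle}(t)$ is precisely \eqref{wf}. Note that the strong-minimality hypothesis on $\bar{x}$ is \emph{not} used at right-scattered points: the conclusion there is purely a consequence of \eqref{convex}.

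For $\tau\in[t_0,t_1]^{\kappa}$ right-dense, I will employ a time-scale adaptation of the classical Weierstrass needle variation. Fix $q\in\R$ and an auxiliary parameter $\alpha>0$. For $\varepsilon>0$ small, pick $s_\varepsilon,s'_\varepsilon\in\mathbb{T}$ with $\tau<s_\varepsilon<s'_\varepsilon$, $s_\varepsilon\to\tau$, and $(s'_\varepsilon-s_\varepsilon)/(s_\varepsilon-\tau)\to\alpha$; this is possible because right-denseness of $\tau$ ensures $\mathbb{T}\cap(\tau,\tau+\eta)\ne\emptyset$ for every $\eta>0$. Define $x_\varepsilon\in C_{prd}^{1}$ to coincide with $\bar{x}$ outside $[\tau,s'_\varepsilon]$, to be affine with slope $q$ on $[\tau,s_\varepsilon]\cap\mathbb{T}$, and to be affine on $[s_\varepsilon,s'_\varepsilon]\cap\mathbb{T}$ with the unique slope that restores $x_\varepsilon(s'_\varepsilon)=\bar{x}(s'_\varepsilon)$. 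Since $\|x_\varepsilon-\bar{x}\|_0\to 0$, the strong-minimum hypothesis yields $\mathcal L[x_\varepsilon]\ge\mathcal L[\bar{x}]$ for all small $\varepsilon$. Using the right-density identity $\bar{x}^{\sigma}(\tau)=\bar{x}(\tau)$, continuity of $f$ and $f_v$, and a first-order Taylor expansion of $f$ in its third argument on the compensating subinterval (where the slope stays within $O(1/\alpha)$ of $\bar{x}^{\triangle}(\tau)$), a direct computation gives $\mathcal L[x_\varepsilon]-\mathcal L[\bar{x}]=(s_\varepsilon-\tau)\bigl[E(\tau,\bar{x}(\tau),\bar{x}^{\triangle}(\tau),q)+O(1/\alpha)\bigr]+o(s_\varepsilon-\tau)$. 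Dividing by $s_\varepsilon-\tau$, sending $\varepsilon\to 0$, and then $\alpha\to\infty$ gives \eqref{wf}.

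The main obstacle is implementing this needle variation cleanly on a generic time scale: unlike in $\R$, one cannot place the auxiliary points at will, and must argue that right-denseness indeed provides sequences $s_\varepsilon,s'_\varepsilon\in\mathbb{T}$ with the required asymptotics and that the resulting piecewise-affine $x_\varepsilon$ lies in $C_{prd}^{1}$, with remainder estimates controlled uniformly in the (possibly very irregular) local geometry of $\mathbb{T}$ near $\tau$. The finitely many exceptional points at which $\bar{x}^{\triangle}(t)$ does not exist must be right-dense (since at a right-scattered $t$, $\bar{x}^{\triangle}(t)=(\bar{x}(\sigma(t))-\bar{x}(t))/\mu(t)$ is always defined by continuity of $\bar{x}$); there, one applies the analogous one-sided version of the needle variation on the appropriate side of $t$, yielding \eqref{wf} with $\bar{x}^{\triangle}(t\pm)$ in place of $\bar{x}^{\triangle}(t)$ as stated.
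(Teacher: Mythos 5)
Your treatment of right-scattered points is correct and agrees with the paper: there, dividing \eqref{convex} by $\mu(t)>0$ gives ordinary convexity of $r\mapsto f(t,x,r)$, and the tangent-line inequality is exactly \eqref{wf}; the paper's proof of that case likewise uses only convexity (after a formal reduction to the sub-problem on $[a,\sigma(a)]$). The problem is the right-dense case, where you have correctly identified the obstacle but not overcome it, and on a general time scale it is not a removable technicality. Your scheme needs auxiliary points $s_\varepsilon<s'_\varepsilon$ in $\mathbb{T}$ with the ratio $\alpha_\varepsilon=(s'_\varepsilon-s_\varepsilon)/(s_\varepsilon-\tau)$ kept in a bounded range $[\alpha,A]$: bounded below so that the compensating slope stays within $O(1/\alpha)$ of $\bar{x}^{\triangle}(\tau)$ and the Taylor expansion applies, and bounded above so that the various $o(1)$ errors on the compensating interval (modulus of continuity of $\bar{x}^{\triangle}$ and of $f_v$, the $f_x$ contribution, the second-order remainder), each multiplied by the length $s'_\varepsilon-s_\varepsilon=\alpha_\varepsilon(s_\varepsilon-\tau)$, remain $o(s_\varepsilon-\tau)$ after division by $s_\varepsilon-\tau$. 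Right-denseness only guarantees points of $\mathbb{T}$ accumulating at $\tau$ from the right; it does not control their spacing. For instance, if near the right-dense point $\tau=0$ the time scale is $\{1/n!:n\in\N\}\cup\{0\}$, then whenever $s_\varepsilon=1/n!$ the smallest admissible $s'_\varepsilon$ is $\sigma(s_\varepsilon)=1/(n-1)!$, forcing $\alpha_\varepsilon\geq n-1\to\infty$; the upper bound $A$ cannot be imposed, the terms of size $\alpha_\varepsilon\cdot o(1)$ are uncontrolled, and your limit computation does not close. So the needle with a \emph{shrinking} compensation interval genuinely fails on some time scales.

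The paper's proof avoids this by compensating over a \emph{fixed} interval: it takes $[a,b]$ between consecutive corners of $\bar{x}$, sets $x=\bar{x}(a)+q(t-a)$ on $[a,\tau]$ and $x=\phi(t,\tau)=\bar{x}(t)+\frac{X(\tau)-\bar{x}(\tau)}{b-\tau}(b-t)$ on $[\tau,b]$, and studies $K(\tau)=\mathcal{L}[x]-\mathcal{L}[\bar{x}]$ as a function of $\tau$ on the time scale, concluding $K^{\triangle}(a)\geq 0$ from $K\geq 0=K(a)$. Because the compensating arc follows $\bar{x}$ plus a small multiple of $b-t$, its first-order contribution to $K^{\triangle}(a)$ is an integral against $f_x-f_r^{\triangle}$, which vanishes by the Euler--Lagrange equation, leaving exactly the excess function. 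If you want to salvage your construction, you would either have to reproduce this fixed-interval compensation (and then you also need the Euler--Lagrange equation for $\bar{x}$, which your outline never invokes), or restrict to time scales where the ratio $\alpha_\varepsilon$ can be confined to a compact subset of $(0,\infty)$, which is a genuine additional hypothesis. The final limiting step for the exceptional points and for $t_1$ is handled in the paper simply by continuity of $E$ and one-sided limits of the already-proved inequality, which is simpler than the one-sided needle you propose.
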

\begin{proof}
Assume that  $\bar{x}$ is a strong local minimum for
\eqref{vp}--\eqref{bc}. We consider two cases. First, suppose that
$a\in [t_{0},t_{1}]^{\kappa}$ is a right-scattered point. If
$\bar{x}$ is a strong minimizer for the problem
\eqref{vp}--\eqref{bc}, then it the restriction of $\bar{x}$ to
$[a,\sigma (a)]\cap \mathbb{T}$ is a strong minimizer for the
problem (see \cite{dynamic})
\begin{equation*}
     \int_{a}^{\sigma(a)}f(t,x^{\sigma}(t),x^{\triangle}(t))\triangle
     t \longrightarrow \min
\end{equation*}
\begin{equation*}
    x(a)=\bar{x}(a),\,  \ x(\sigma(a))=\bar{x}(\sigma(a)).
\end{equation*}
We define the function $h:\R\rightarrow \R$ by $h(q)=
\int_{a}^{\sigma(a)}f(t,\bar{x}^{\sigma}(t),q)\triangle t$. Hence,
$h(q)=\mu(a)f(a,\bar{x}^{\sigma}(a),q)$. By assumption
\eqref{convex}, we have immediately that
\begin{equation*}
h(q)-h(\bar{x}^{\triangle}(a))
-(q-\bar{x}^{\triangle}(a))h'(\bar{x}^{\triangle}(a))\geq 0 \, .
\end{equation*}
This gives
\begin{equation*}
E[a,\bar{x}^{\sigma}(a),\bar{x}^{\triangle}(a),q]\geq 0.
\end{equation*}

Second, we suppose that $a\in [t_{0},t_{1}]^{\kappa}$, $a<t_{1}$, is
a right-dense point and $[a,b]\cap \T$ is an interval between two
successive points where $\bar{x}^{\triangle}(t)$ does not exist.
Then, there exists a sequence $\{\varepsilon_{k}:k \in
\N\}\subset[t_{0},t _{1}]$ with $\lim_{k \rightarrow
\infty}\varepsilon _{k}=a$. Let $\tau$ be any number such that
$\sigma (\tau) \in [a,b)$ and $q \in \R$. We define the function
$x:[t_{0},t_{1}]\cap \mathbb{T}\rightarrow \R$ as follows:
\begin{gather*}
x(t)=
\begin{cases}
\bar{x}(t) & \text{ if } t\in[t_{0},a]\cup [b,t_{1}] \\
 X(t) & \text{ if } t\in[a,\tau], \\
 \phi (t,\tau) & \text{ if } t\in [\tau,b] \, ,
\end{cases}
\end{gather*}
where
\begin{equation*}
X(t)=\bar{x}(a)+q(t-a), \quad q\in\R,
\end{equation*}
\begin{equation*}
    \phi
    (t,\tau)=\bar{x}(t)+\frac{X(\tau)-\bar{x}(\tau)}{b-\tau}(b-t).
\end{equation*}
Clearly, given $\delta >0$, for any $q$ one can choose $\tau$ such
that $\|x- \bar{x}\|_{0}<\delta$. Let us now consider the function
$K$ defined for all $\tau\in [a,b)\cap \T$ such that
$\sigma(\tau)\in [a,b)\cap \T$ with the values
$K(\tau)=\mathcal{L}[x]-\mathcal{L}[\bar{x}]$. Since
$\mathcal{L}[x]\geq \mathcal{L}[\bar{x}]$, by hypothesis,
$K(\tau)\geq 0$ and $K(a)=0$, it follows by Theorem~1.12 in
\cite{BP2} that $K^{\triangle}(a) \geq 0$. By the definition of $x$,
we have

\begin{multline*}
K(\tau)=\int_{a}^{\tau}\{f[t,X^{\sigma}(t),X^{\triangle}(t)]
-f[t,\bar{x}^{\sigma}(t),\bar{x}^{\triangle}(t)]\}\triangle
t\\
+\int_{\tau}^{b}\{f[t,\phi(\sigma(t),\tau),\phi^{\triangle_{1}}(t,\tau)]
-f[t,\bar{x}^{\sigma}(t),\bar{x}^{\triangle}(t)]\}\triangle t
\end{multline*}
so that, by Theorem 5.37 in \cite{BG} and Theorem~7.1 in
\cite{BGpartial}, we obtain
\begin{equation}\label{1}
K^{\triangle}(\tau) =f[\tau,X^{\sigma}(\tau),X^{\triangle}(\tau)]
-f[\tau,\phi(\sigma(\tau),\sigma(\tau)),
\phi^{\triangle_{t}}(\tau,\sigma(\tau))]
\end{equation}
\begin{equation}\label{2}
+\int_{\tau}^{b}\{f_{x}[t,\phi(\sigma(t),\tau),
\phi^{\triangle_{1}}(t,\tau)]\phi^{\triangle_{2}}(\sigma(t),\tau)
+f_{r}[t,\phi(\sigma(t),\tau),\phi^{\triangle_{1}}(t,
\tau)]\phi^{\triangle_{1}\triangle_{2}}(t, \tau)\}\triangle t \, .
\end{equation}
Invoking the relation $\phi^{\triangle_{1}\triangle_{2}}
=\phi^{\triangle_{2}\triangle_{1}}$ (see Theorem~6.1 in
\cite{BGpartial}), integration by parts gives
\begin{equation*}
\int_{\tau}^{b}f_{r}\phi^{\triangle_{2}\triangle_{1}}(t,\tau)\triangle
t=f_{r}\phi^{\triangle_{2}} (t,\tau)|_{\tau}^{b}
-\int_{\tau}^{b}f_{r}^{\triangle_{1}}\phi^{\triangle_{2}}(\sigma(t),\tau)\triangle
t.
\end{equation*}
Thus, \eqref{2} becomes
\begin{equation*}
\int_{\tau}^{b}[f_{x}
-f_{r}^{\triangle_{1}}]\phi^{\triangle_{2}}(\sigma(t),
\tau)\triangle t
\end{equation*}
\begin{equation}\label{5}
+f_{r}\phi^{\triangle_{2}} (t,\tau)|_{\tau}^{b}.
\end{equation}
From the definition of $\phi(t,\tau)$ we have
\begin{equation*}
\phi^{\triangle_{2}}(t,\tau)=\frac{(X^{\triangle}(\tau)-\bar{x}^{\triangle}(\tau))(b-\tau)+X(\tau)-\bar{x}(\tau)}{(b-\tau)(b-\sigma(\tau))}(b-t)
\end{equation*}
so that $\phi^{\triangle_{2}}(b,a)=0$,
$\phi^{\triangle_{2}}(a,a)=X^{\triangle}(a)-\bar{x}^{\triangle}(a)$.
Also, $\phi(\sigma(t),a)=\bar{x}(\sigma(t))$,
$\phi^{\triangle_{1}}(t,a)=\bar{x}^{\triangle}(t)$. Thus, letting
$\tau=a$ in  \eqref{5} we obtain
\begin{equation*}
-f_{r}[a,\bar{x}(\sigma(a)),\bar{x}^{\triangle}(a)][X^{\triangle}(a)-\bar{x}^{\triangle}(a)].
\end{equation*}
Since $\bar{x}$ verifies the Euler-Lagrange equation (see
\cite{b7}), we get
\begin{equation*}
\int_{\tau}^{b}\{f_{x}[t,\bar{x}(\sigma(t)),\bar{x}^{\triangle}(t)]
-f_{r}^{\triangle}[t,\bar{x}(\sigma(t)),\bar{x}^{\triangle}(t)]
\}\phi^{\triangle_{2}}(\sigma(t),\tau)\triangle t=0.
\end{equation*}
On account of the above, from \eqref{1}--\eqref{2} we have
\begin{multline*}
K^{\triangle}(a)=f[a,X^{\sigma}(a),X^{\triangle}(a)]-f[a,\phi(\sigma(a),\sigma(a)),\phi^{\triangle_{1}}(a,\sigma(a))]\\
-f_{r}[a,\bar{x}(\sigma(a)),\bar{x}^{\triangle}(a)][X^{\triangle}(a)-\bar{x}^{\triangle}(a)].
\end{multline*}
However, $X^{\sigma}(a)=\bar{x}^{\sigma}(a)$, $X^{\triangle}(a)=q$,
$\phi(\sigma(a),\sigma(a))=\bar{x}^{\sigma}(a)$,
$\phi^{\triangle_{1}}(a,\sigma(a))=\bar{x}^{\triangle}(a)$.
Therefore,
\begin{equation*}
K^{\triangle}(a)=f[a,\bar{x}^{\sigma}(a),q]
-f[a,\bar{x}^{\sigma}(a),\bar{x}^{\triangle}(a)]
-f_{r}[a,\bar{x}^{\sigma}(a),\bar{x}^{\triangle}(a)][q-\bar{x}^{\triangle}(a)],
\end{equation*}
and from this
\begin{equation*}
K^{\triangle}(a) =E[a,\bar{x}^{\sigma}(a),\bar{x}^{\triangle}(a),q]
\geq 0.
\end{equation*}
To establish the condition \eqref{wf} for all $t\in [t_{0},
t_{1}]^{\kappa}$, we consider the limit $t\rightarrow t_{1}$ from
left when $t_{1}$ is left-dense, and the limit $t\rightarrow t_{p}$
from left and from right when $t_{p}\in T$.
\end{proof}

\begin{Remark}
For $\mathbb{T}=\mathbb{R}$ problem \eqref{vp}--\eqref{bc} coincides
with the classical problem of the calculus of variations. Condition
\eqref{convex} is then trivially satisfied and Theorem \ref{Weier} is known as the Weierstrass necessary condition.
\end{Remark}

\begin{Remark}
Let $\mathbb{T}$ be a time scale with $\mu(t)$ depending
on $t$ and such that the time scale interval $[t_{0}, t_{1}]$ may be written as follows: $[t_{0}, t_{1}]=L\cup U$ with $\mu(t)\neq 0$ for all $t\in L$ and $\mu(t)= 0$ for all $t\in U$. An example
of such time scale is the Cantor set \cite{BP}. Then, for $t\in U$ the condition \eqref{convex} is trivially satisfied, while for $t\in L$ \eqref{convex} is nothing more than convexity of $f$
with respect to $r$.
\end{Remark}


\section{Special Cases}
\label{sec:ex}

Let $\mathbb{T}=\mathbb{Z}$. If $\bar{x}$ is a local minimum of the problem
\begin{equation*}
\begin{gathered}
 \text{minimize} \quad   \mathcal{L}[x]=\sum_{t=t_{0}}^{t_{1}-1}f(t,x(t+1),\triangle
 x(t)) \, ,\\
x(t_{0})=\alpha,\,  \ x(t_{1})=\beta, \quad \alpha,\beta\in \R,
\end{gathered}
\end{equation*}
and the function $f(t,x,r)$ is convex with respect to $r \in \R$ for each $(t,x)\in[t_{0}, t_{1}-1]\times \R$, then
$E[t,\bar{x}(t+1),{\triangle}\bar{x}(t),q]\geq 0$
for all $t\in[t_{0}, t_{1}-1]$ and all $q\in \R$.

\medskip

Let now $\mathbb{T}=q^\mathbb{N}$, $q>1$. If $\bar{x}$ is a local minimum of the problem
\begin{equation*}
\text{minimize} \quad \mathcal{L}[x]=\sum_{t\in[t_{0},t_{1})}(q-1)t
f \left(t,x(qt),\frac{x(qt)-x(t)}{qt-t}\right)\, ,
\end{equation*}
\begin{equation*}
x(t_{0})=\alpha,\,  \ x(t_{1})=\beta, \quad \alpha,\beta\in \R,
\end{equation*}
and the function $f(t,x,r)$ is convex with respect to $r \in \R$ for each $(t,x)\in[t_{0}, t_{1})\times \R$, then
\begin{equation*}
E \left[t,\bar{x}(qt),\frac{\bar{x}(qt)-\bar{x}(t)}{qt-t},p
\right]\geq 0
\end{equation*}
for all $t\in[t_{0}, t_{1})$ and all $p\in \R$.


\section*{Acknowledgments}

The first author was supported by Bia{\l}ystok Technical University,
via a project of the Polish Ministry of Science and Higher Education
"Wsparcie miedzynarodowej mobilnosci naukowcow"; the second author
by the R\&D unit CEOC, via FCT and the EC fund FEDER/POCI 2010. ABM
is grateful to the good working conditions at the Department of
Mathematics of the University of Aveiro where this research was
carried out.




\begin{thebibliography}{99}

\bibitem{Almeida:T} Almeida, R.\ and\ Torres, D. F. M.
Isoperimetric problems on time scales with nabla derivatives.
{\em J. Vib. Control}, 2009,
DOI: 10.1177/1077546309103268, in press. 
{\tt arXiv:0811.3650 [math.OC]}

\bibitem{Atici:Uysal:06}
Atici, F. M., Biles, D. C.\ and\ Lebedinsky, A.  An application of
time scales to economics. {\em Math. Comput. Modelling}, 2006, {\bf
43}, no.~7-8, 718--726.

\bibitem{Atici:Uysal:08} Atici, F. M.\ and\ Uysal, F.
A production-inventory model of HMMS on time scales. {\em Appl.
Math. Lett.}, 2008, {\bf 21}, no.~3, 236--243.

\bibitem{Bangerezako}
Bangerezako, G. Variational $q$-calculus. {\em J. Math. Anal.
Appl.}, 2004, {\bf 289}, no.~2, 650--665.

\bibitem{b:t:08}
Bartosiewicz, Z.\ and\ Torres, D. F. M.
Noether's theorem on time scales.
{\em J. Math. Anal. Appl.}, 2008, {\bf 342},
no.~2, 1220--1226.
{\tt arXiv:0709.0400 [math.OC]}

\bibitem{b7}Bohner, M.
Calculus of variations on time scales.
{\em Dynam. Systems Appl.},
2004, {\bf 13}, no.~3-4, 339--349.

\bibitem{BG}
Bohner, M.\ and\ Guseinov, G. Sh. Riemann and Lebesgue integration.
In {\it Advances in dynamic equations on time scales}, 117--163,
Birkh\"auser, Boston, Boston, MA, 2003.

\bibitem{BGpartial}
Bohner, M.\ and\ Guseinov, G. Sh. Partial differentiation on time
scales. {\em Dynam. Systems Appl.}, 2004, {\bf 13}, no.~3-4,
351--379.

\bibitem{B:G:07}
Bohner, M.\ and\ Guseinov, G. Sh. Double integral calculus of
variations on time scales. {\em Comput. Math. Appl.}, 2007, {\bf
54}, no.~1, 45--57.

\bibitem{BP}
Bohner, M.\ and\ Peterson, A. {\it Dynamic equations on time
scales}. Birkh\"auser Boston, Boston, MA, 2001.

\bibitem{BP2}
Bohner, M.\ and\ Peterson, A.
{\it Advances in dynamic equations on
time scales}. Birkh\"auser Boston, Boston, MA, 2003.

\bibitem{BM}
Brechtken-Manderscheid, U. {\it Introduction to the calculus of
variations}. Translated from the German by P. G. Engstrom,
Chapman \& Hall, London, 1991.

\bibitem{F:T:07}
Ferreira, R. A. C.\ and\ Torres, D. F. M.
Remarks on the calculus of variations on time scales.
{\em Int. J. Ecol. Econ. Stat.}, 2007,
{\bf 9}, no.~F07, 65--73.
{\tt arXiv:0706.3152 [math.OC]}

\bibitem{F:T:08}
Ferreira, R. A. C.\ and\ D. F. M. Torres,
Higher-order calculus of variations on time scales.
In {\it Mathematical control theory and
finance}, 149--159, Springer, Berlin, 2008.
{\tt arXiv:0706.3141 [math.OC]}

\bibitem{HZ1}
Hilscher, R.\ and\ Zeidan, V.
Calculus of variations on time scales:
weak local piecewise $C\sp 1\sb {\rm rd}$ solutions with variable
endpoints. {\em J. Math. Anal. Appl.}, 2004, {\bf 289}, no.~1,
143--166.

\bibitem{HZ2}
Hilscher, R.\ and\ Zeidan, V.
Weak maximum principle and accessory
problem for control problems on time scales.
{\em Nonlinear Anal.}, 2009, {\bf 70}, no.~9, 3209--3226.

\bibitem{QC}
Kac, V. \ and\ Cheung, P. {\it Quantum calculus}, Springer, New
York, 2002.

\bibitem{K:P:91}
Kelley, W. G.\ and\ Peterson, A. C. {\it Difference equations}.
Academic Press, Boston, MA, 1991.

\bibitem{L}
Leitmann, G.  {\it The calculus of variations and optimal control},
Plenum, New York, 1981.

\bibitem{abmalina:t}
Malinowska, A. B.\ and\ Torres, D. F. M.
Necessary and sufficient conditions for local Pareto optimality
on time scales. {\em J. Math. Sci.}, (N. Y.), 2009, in press.
{\tt arXiv:0801.2123 [math.OC]}

\bibitem{NataliaHigherOrderNabla}
Martins, N.\ and\ Torres, D. F. M. Calculus of variations on time
scales with nabla derivatives. {\em Nonlinear Anal., Theory Methods Appl. Series A}, 2008,
DOI: 10.1016/j.na.2008.11.035, in press.
{\tt arXiv:0807.2596 [math.OC]}

\bibitem {dynamic}
Seiffertt, J., Sanyal, S.\ and\ Wunsch, D. C.
Hamilton-Jacobi-Bellman Equations and Approximate Dynamic
Programming on Time Scales. {\em IEEE Trans. Syst. Man Cybern., Part B: Cybern.}, 2008, {\bf 38}, no.~4, 918--923.

\end{thebibliography}
\end{document}